\newtheorem{theorem}{Theorem}[section]
\newtheorem{lemma}[theorem]{Lemma}
\newtheorem{corollary}[theorem]{Corollary}
\theoremstyle{definition}
\newtheorem{remark}[theorem]{Remark}
\numberwithin{equation}{section}
\begin{document}

\title[On the  Forelli--Rudin projection  theorem]{On the Forelli--Rudin projection theorem}

\author{Marijan Markovi\'{c}}

\begin{abstract}
Motivated by the Forelli--Rudin projection theorem we give in this paper a criterion for
boundedness of an  integral operator  on weighted  Lebesgue       spaces in the interval
$(0,1)$. We also  calculate     the precise norm of  this integral operator. This is the
content  of the first  part of the paper. In the second part, as applications,   we give
some results concerning   the Bergman projection and  the Berezin transform. We derive a
generalization of  the Dostani\'{c} result  on the norm of the Berezin transform  acting
on  Lebesgue spaces  over  the unit ball  in          $\mathbf{C}^n$.
\end{abstract}

\subjclass[2010]{Primary 45P05, Secondary 47B38}

\keywords{the  Bergman projection, the Berezin transform,  Bergman spaces}

\address{
Faculty of Natural Sciences and Mathematics\endgraf
University of Montenegro\endgraf
Cetinjski put b.b.\endgraf
81000 Podgorica\endgraf
Montenegro}

\email{marijanmmarkovic@gmail.com}

\maketitle

\section{The main result and its proof}
\subsection{Gauss hypergeometric functions}
We  firstly  recall  some basic facts concerning the  Gauss hypergeometric functions. For
$a,\, b,\, c\in\mathbf{C}$    the Gauss hypergeometric  function is given   by the series
\begin{equation*}
{_2}{F}_1(a,b;c;z) = \sum_{k=0}^\infty\frac{(a)_k(b)_k}{(c)_k}\frac {z^k}{k!}
\end{equation*}
for  all $c$  different from zero and negative integers.  Here, for   any complex  number
$q$   the shifted factorial  (the Pochhammer symbol) is
\begin{equation*}
(q)_\beta= \left\{
\begin{array}{ll}
q(q + 1)\cdots (q + \beta  - 1),   & \hbox{if $\beta\ge1$,} \\
1,                                 & \hbox{if $\beta=0$},
\end{array}
\right.
\end{equation*}
where     $\beta$ is a non--negative  integer.
The above series converges at least    for $|z|<1$, and for
$|z|=1$ if $\Re ({c-a-b})>0$.

We will  need  the following three  identities:

i) For $\Re c>\Re d >0$ there  holds
\begin{equation}\label{EQ.INT.TRANS.1}
{_2}F_1 (a, b; c; z) = \frac {\Gamma(c)}{\Gamma(d) \Gamma(c-d)}
\int_0^1 t^{d-1} (1-t)^{c-d-1}\, {_2}F_1 (a, b; d; tz)\, dt,
\end{equation}
where  $z$  is   different from $1$  and $\left|\arg(1-z)\right|<\pi$. This is known  as
the Euler formula.

ii) The Euler transform says that
\begin{equation}\label{EQ.SIMPLE.TRANSFORM}
{_2}F_1 (a, b; c; z) =  (1-z)^{c-a-b}\,  {_2}F_1 (c-a, c- b; c, z).
\end{equation}

iii) Gauss proved that,  if $\Re (c-a-b)>0$, then
\begin{eqnarray}\label{EQ.GAUSS}
{_2}{F}_1(a,b;c;1) =
\frac{\Gamma(c) \Gamma(c-a-b)}{\Gamma(c-a)\Gamma(c-b)}.
\end{eqnarray}

For  all these facts we refer to  the second chapter in~\cite{AAR.BOOK.SPECIAL},   where
the reader may also find all properties concerning the Gauss hypergeometric functions we
need in the paper.

\subsection{The main result}
For          $\mu>0$ we denote   by  $L^p_\mu(0,1)$ the space  of all measurable function
$\varphi(t)$  in  $(0,1)$                                   which   satisfy the condition
\begin{equation*}
\|\varphi \|_{p,\mu}^p =    \mu  \int_0^1  |\varphi (t)|^p t^{\mu-1} dt<\infty.
\end{equation*}
The normalized        weighted   measure $\mu  t^{\mu - 1} dt$ we   denote by $d\mu (t)$.

For  a  parameter $\sigma>-1$ we  will consider the operator $F_\sigma$  given    in the
following way
\begin{equation*}
{F}_\sigma  \varphi (s)= \mu
\int_0^1 (1-t)^\sigma \, {_2}F_1(\lambda,\lambda;\mu;s\, t)\, \varphi (t)\, t^{\mu-1} dt,
\end{equation*}
where we have denoted
\begin{equation*}
\lambda =  (\mu +   \sigma   + 1)/2.
\end{equation*}
The operator   $F_\sigma$      may be viewed  as an integral operator  on $L^p_\mu(0,1)$
with the kernel
\begin{equation*}
K_\sigma (s,t) =(1-t)^\sigma\, {_2}F_1 (\lambda, \lambda;\mu;s\, t).
\end{equation*}
It    happens that the operator  $F_\sigma$ is bounded on $L^p_\mu (0,1)$ if and only if
$\sigma   > 1/p -1$.  This is the content of the following

\begin{theorem}
For            $1\le p < \infty$ the  operator ${F}_\sigma$ maps continuously  the space
$L^p_\mu(0,1)$   into itself  if and only if $\sigma> 1/p -1$. Moreover,         we have
\begin{equation*}
\|{F}_\sigma\|_{L^p_{\mu}(0,1)\rightarrow L^p_{\mu}(0,1)}  =
\frac {\Gamma(\mu+1)}{\Gamma^2(\lambda)}\Gamma(1/p) \Gamma(\sigma+1- 1/p)
\end{equation*}
for  all $\sigma> 1/p -1$.
\end{theorem}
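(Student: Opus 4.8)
The plan is to prove both directions by exploiting that $K_\sigma\ge 0$, so that it suffices to test on nonnegative $\varphi$ and the operator norm can be captured from above by Schur's test and from below by a sharp boundary family. Throughout, let $p'$ be the conjugate exponent, $1/p+1/p'=1$, and write $N=\frac{\Gamma(\mu+1)}{\Gamma^2(\lambda)}\Gamma(1/p)\Gamma(\sigma+1-1/p)$ for the claimed norm. For the upper bound I would use Schur's test in the form: if $h>0$ and $M_1,M_2>0$ satisfy $\int_0^1 K_\sigma(s,t)h(t)^{p'}\,d\mu(t)\le M_1\,h(s)^{p'}$ and $\int_0^1 K_\sigma(s,t)h(s)^{p}\,d\mu(s)\le M_2\,h(t)^{p}$ for a.e.\ $s,t$, then $\|F_\sigma\|\le M_1^{1/p'}M_2^{1/p}$. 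The decisive choice is the boundary weight $h(t)=(1-t)^{-1/(pp')}$, so that $h^{p'}=(1-t)^{-1/p}$ and $h^{p}=(1-t)^{-1/p'}$.

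The heart of the argument is the closed-form evaluation of the two Schur integrals, and this is exactly where identities \eqref{EQ.INT.TRANS.1}--\eqref{EQ.GAUSS} enter. For the first, $\int_0^1 K_\sigma(s,t)h(t)^{p'}\,d\mu(t)=\mu\int_0^1 (1-t)^{\sigma-1/p}\,{_2}F_1(\lambda,\lambda;\mu;st)\,t^{\mu-1}\,dt$; recognizing this as an Euler integral, the formula \eqref{EQ.INT.TRANS.1} with inner parameter $d=\mu$ and outer parameter $c=2\lambda-1/p$ (legitimate precisely because $\sigma>1/p-1$ forces $c-d-1=\sigma-1/p>-1$ and $c>d$) collapses it to $\frac{\Gamma(\mu+1)\Gamma(\sigma+1-1/p)}{\Gamma(2\lambda-1/p)}\,{_2}F_1(\lambda,\lambda;2\lambda-1/p;s)$. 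The Euler transform \eqref{EQ.SIMPLE.TRANSFORM} then extracts exactly the singular factor, ${_2}F_1(\lambda,\lambda;2\lambda-1/p;s)=(1-s)^{-1/p}\,{_2}F_1(\lambda-1/p,\lambda-1/p;2\lambda-1/p;s)$, matching $h(s)^{p'}=(1-s)^{-1/p}$; the leftover hypergeometric has nonnegative Taylor coefficients (its numerator Pochhammer factors are squares and $2\lambda-1/p=\mu+\sigma+1-1/p>0$), hence is increasing on $(0,1)$ and is majorized by its value at $s=1$. That value is supplied by \eqref{EQ.GAUSS}, whose hypothesis $c-a-b=1/p>0$ holds, giving $\frac{\Gamma(2\lambda-1/p)\Gamma(1/p)}{\Gamma^2(\lambda)}$ and hence $M_1=N$. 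The transposed integral is treated identically, applying \eqref{EQ.INT.TRANS.1} in the variable $s$ and then \eqref{EQ.SIMPLE.TRANSFORM} and \eqref{EQ.GAUSS}; here the factor $(1-t)^\sigma$ cancels against the exponent produced by the Euler transform, and one obtains $M_2=N$. Thus $\|F_\sigma\|\le N$.

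For the reverse inequality I would test on $f_a(t)=(1-t)^{-a}$ with $0<a<1/p$ and let $a\uparrow 1/p$. The same identities evaluate $F_\sigma f_a$ in closed form: \eqref{EQ.INT.TRANS.1} gives $F_\sigma f_a(s)=\frac{\Gamma(\mu+1)\Gamma(\sigma+1-a)}{\Gamma(2\lambda-a)}\,{_2}F_1(\lambda,\lambda;2\lambda-a;s)$, and \eqref{EQ.SIMPLE.TRANSFORM} together with \eqref{EQ.GAUSS} yields the boundary asymptotics $F_\sigma f_a(s)\sim C_a\,(1-s)^{-a}$ as $s\to 1^-$, where $C_a=\frac{\Gamma(\mu+1)\Gamma(\sigma+1-a)\Gamma(a)}{\Gamma^2(\lambda)}$. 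Since $f_a$ and $F_\sigma f_a$ share the boundary rate $(1-\cdot)^{-a}$, both $\|f_a\|_{p,\mu}^p$ and $C_a^{-p}\|F_\sigma f_a\|_{p,\mu}^p$ are asymptotic to $\mu\,(1-ap)^{-1}$ as $a\uparrow 1/p$, so the divergent factor cancels in the quotient and $\|F_\sigma f_a\|_{p,\mu}/\|f_a\|_{p,\mu}\to C_{1/p}=N$. This gives $\|F_\sigma\|\ge N$ and completes the norm computation for $1<p<\infty$; the case $p=1$ is the degenerate instance $h\equiv 1$, where the Schur test reduces to the single identity $\int_0^1 K_\sigma(s,t)\,d\mu(s)={_2}F_1(\mu+1-\lambda,\mu+1-\lambda;\mu+1;t)$ (via \eqref{EQ.SIMPLE.TRANSFORM}), whose supremum over $t$ equals $\Gamma(\mu+1)\Gamma(\sigma)/\Gamma^2(\lambda)$ by \eqref{EQ.GAUSS}.

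Finally, the necessity of $\sigma>1/p-1$ follows from the very same family: if $\sigma\le 1/p-1$, then $\sigma+1\le 1/p$, so choosing $a\uparrow\sigma+1$ keeps $f_a\in L^p_\mu(0,1)$ (since $ap<1$) while the constant $C_a=\frac{\Gamma(\mu+1)\Gamma(\sigma+1-a)\Gamma(a)}{\Gamma^2(\lambda)}$ governing $F_\sigma f_a$ blows up through the pole of $\Gamma(\sigma+1-a)$; hence $F_\sigma$ is unbounded. The main obstacle I anticipate is the closed-form evaluation of the Schur integrals: the three hypergeometric identities must be chained in precisely the right order, and the parameter bookkeeping must be carried out carefully enough to verify that every convergence and positivity condition is equivalent to the single hypothesis $\sigma>1/p-1$. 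Once that is established, the sharpness is a routine boundary-concentration limit.
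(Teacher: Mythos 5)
Your argument is correct, and its first half coincides with the paper's: the same Schur weight $(1-t)^{-1/(pq)}$, the same chain \eqref{EQ.INT.TRANS.1} $\rightarrow$ \eqref{EQ.SIMPLE.TRANSFORM} $\rightarrow$ \eqref{EQ.GAUSS}, the same constant on both Schur integrals. Where you genuinely diverge is the lower bound. The paper proceeds by duality: it pairs $F_\sigma\Phi$ against a test function $\Psi\in L^q_\mu(0,1)$, with $\Phi(t)=C\,t^{\theta/p}(1-t)^{\tilde\theta/p}$ and $\Psi(s)=\tilde C\,(1-s)^{\tilde\vartheta/q}$ normalized to unit norm, evaluates the bilinear form in closed form using the auxiliary integral \eqref{LE.INT.TRANS.2}, and sends $(\theta,\tilde\theta)\to(1,-1)$ via the elementary inequality \eqref{LE.LIMSUP}; this keeps every integral linear in a hypergeometric function, so nothing ever has to be estimated asymptotically. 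You instead test on the single family $f_a(t)=(1-t)^{-a}$, $a\uparrow 1/p$, and compare $\|F_\sigma f_a\|_{p,\mu}$ with $\|f_a\|_{p,\mu}$ directly. That buys you a shorter argument with no duality and no need for \eqref{LE.INT.TRANS.2}, but the step ``$C_a^{-p}\|F_\sigma f_a\|_{p,\mu}^p\sim\mu(1-ap)^{-1}$'' does not follow from the pointwise asymptotics $F_\sigma f_a(s)\sim C_a(1-s)^{-a}$ alone: you should write $F_\sigma f_a(s)=g_a(s)(1-s)^{-a}$ with $g_a$ increasing and $g_a(1)=C_a$, bound $\|F_\sigma f_a\|_{p,\mu}^p$ below by $g_a(s_0)^p\,\mu\int_{s_0}^1(1-s)^{-ap}s^{\mu-1}\,ds$, observe that the contribution of $(0,s_0)$ to $\|f_a\|_{p,\mu}^p$ stays bounded while the whole norm blows up, and let $s_0\to1$ after $a\to1/p$ --- routine, but it is the one step you assert rather than prove. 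Two smaller remarks: for $p=1$ the Schur test only yields the upper bound, and the equality needs the standard fact that for a positive kernel the $L^1\to L^1$ norm equals $\sup_t\int_0^1 K_\sigma(s,t)\,d\mu(s)$, which the paper isolates as Lemma~\ref{LE.NORM.L1}; and your necessity argument (blow-up of $\Gamma(\sigma+1-a)$ as $a\uparrow\sigma+1\le 1/p$ against a family that stays in, or blows up more slowly in, $L^p_\mu$) is actually more explicit than the paper's, which merely remarks that $F_\sigma$ fails to be well defined on $L^p_\mu(0,1)$ when $\sigma\le 1/p-1$.
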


If          $T:X\rightarrow Y$ is a linear operator  from  a  linear space with  a norm
$(X,\|\cdot\|_X)$  into $(Y,\|\cdot\|_Y)$, we denote by $\|T\|_{X\rightarrow Y}$    the
norm of $T$, i.e.,
\begin{equation*}
\|T\| _ { X\rightarrow Y} \ = \sup_{\|x\|_{X}\le 1 } {\|Tx\|_Y}.
\end{equation*}

\subsection{Auxiliary results}
The case  $p=1$  of our theorem    is not difficult to consider. It will be derived from

\begin{lemma}\label{LE.NORM.L1}
Let $\nu$ be a finite measure on $X$. Let $T$ be an integral operator which    acts   on
$L^1 = L^1(X,\nu)$  with the non--negative kernel $K(x,y)$,                  i.e.,   let
\begin{equation*}
T f (x) =\int_X   K(x,y)\, f(y)\, d\nu(y).
\end{equation*}
Then $T$ maps $L^1$ into itself if and only if
\begin{equation*}
\sup_{y\in X}  \int_X  K(x,y) \, d\nu(x)<\infty.
\end{equation*}
In this case we have
\begin{equation*}
\|T\|_{ L^1 \rightarrow L^1} =   \sup_{y\in X} \int_X  K(x,y) \, d\nu(x).
\end{equation*}
\end{lemma}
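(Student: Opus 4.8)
The plan is to reduce everything to Tonelli's theorem together with a test against normalized indicator functions. I set $g(y)=\int_X K(x,y)\,d\nu(x)$; since $K\ge0$ is jointly measurable, Tonelli guarantees that $g$ is a well-defined measurable function on $X$ with values in $[0,\infty]$, and I write $M=\sup_{y\in X} g(y)$.

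First I would establish the upper bound. For $f\in L^1$ the non-negativity of $K$ gives $|Tf(x)|\le \int_X K(x,y)\,|f(y)|\,d\nu(y)$ pointwise, and a further application of Tonelli lets me interchange the order of integration:
\begin{equation*}
\|Tf\|_{L^1}\le \int_X\!\!\int_X K(x,y)\,|f(y)|\,d\nu(y)\,d\nu(x)=\int_X |f(y)|\,g(y)\,d\nu(y)\le M\,\|f\|_{L^1}.
\end{equation*}
Thus, whenever $M<\infty$, the operator $T$ maps $L^1$ continuously into itself with $\|T\|_{L^1\to L^1}\le M$.

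For the reverse inequality I would test $T$ against the functions $f_B=\nu(B)^{-1}\mathbf{1}_B$, where $B$ ranges over the measurable sets with $0<\nu(B)<\infty$ (every set of positive measure qualifies because $\nu$ is finite). Each $f_B$ is non-negative with $\|f_B\|_{L^1}=1$, so $Tf_B\ge0$ and, again by Tonelli,
\begin{equation*}
\|Tf_B\|_{L^1}=\int_X Tf_B(x)\,d\nu(x)=\frac{1}{\nu(B)}\int_B g(y)\,d\nu(y).
\end{equation*}
Taking the supremum of these averages over all admissible $B$ recovers the (essential) supremum of $g$, which yields $\|T\|_{L^1\to L^1}\ge M$ and, combined with the previous step, the claimed identity $\|T\|_{L^1\to L^1}=M$. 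In the situation where the lemma is applied the kernel is continuous and $\nu$ has full support, so the supremum and the essential supremum of $g$ coincide; this is the one point that requires care, since for a general $\nu$ the averaging procedure only detects $g$ up to null sets.

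Finally, for the ``only if'' direction I would argue by contraposition: if $M=\infty$, I can choose sets $B_k$ with $\nu(B_k)^{-1}\int_{B_k} g\,d\nu\ge 4^k$ and form $f=\sum_{k\ge1} 2^{-k} f_{B_k}\in L^1$. Since $T$ is a positive operator, $Tf\ge 2^{-k}Tf_{B_k}$ pointwise for every $k$, whence $\|Tf\|_{L^1}\ge 2^{-k}\cdot 4^k=2^k$ for all $k$ and therefore $Tf\notin L^1$. Hence $T$ fails to map $L^1$ into itself, which completes the equivalence. I expect the interchange-of-integration steps to be entirely routine; the only genuinely delicate issue is the identification of $\sup_{y} g(y)$ with the quantity actually produced by the $L^1$-test functions.
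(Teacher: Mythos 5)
Your proof is correct and takes a genuinely different --- and in fact more careful --- route than the paper for the lower bound and the necessity. Writing $g(y)=\int_X K(x,y)\,d\nu(x)$, the paper obtains the upper bound exactly as you do, but for the reverse inequality it simply plugs in $f\equiv 1$ and asserts that ``we have the equality sign at each place above.'' That assertion is only literally true when $g$ is (a.e.) constant: with $f\equiv1$ one gets $\|Tf\|_1=\int_X g\,d\nu$, which equals $\sup_y g(y)\cdot\nu(X)$ only in that degenerate case; in the actual application of the lemma, $g(t)={_2}F_1(\mu+1-\lambda,\mu+1-\lambda;\mu+1;t)$ is strictly increasing and attains its supremum only in the limit $t\to1$, so the paper's own test function does not realize the norm. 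Your normalized indicators $f_B=\nu(B)^{-1}\mathbf{1}_B$, whose images have $L^1$-norm equal to the average of $g$ over $B$, are exactly the right fix: they concentrate where $g$ is large and recover $\operatorname{ess\,sup} g$ as a genuine supremum over unit vectors. You are also right to flag the one real subtlety, namely that this procedure (and indeed the operator norm itself) only sees $g$ up to $\nu$-null sets, so the lemma's $\sup$ should properly be an essential supremum --- harmless in the application, where $g$ is continuous and $\nu$ has full support, but worth the remark. Finally, your contrapositive via $f=\sum_k 2^{-k}f_{B_k}$ gives an explicit $L^1$ function with $Tf\notin L^1$, which is a cleaner and more complete justification of the ``only if'' direction than the paper's appeal to the same $f\equiv1$ computation.
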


\begin{proof}
For all  $f\in L^1$ there holds
\begin{equation*}\begin{split}
\| {T} f\|_1&  =  \int_X  \left|\int_X  K(x,y) \, f(y)\, d\nu(y)\right|  d\nu(x)
\\&  \le \int_X \left\{ \int_X  K (x,y)\, d\nu(x)\right\} |f(y)| \, d\nu(y)
\\&  \le \left\{\sup_{y\in X} \int_X K(x,y)|\, d\nu(x)\right\} \|f\|_1.
\end{split}\end{equation*}

If we take $f\equiv 1$, then we have the equality  sign at each place above. This proves
the necessary condition, and gives the norm of $T$.
\end{proof}

In the case                  $1<p<\infty$    we will use the following well known result.

\begin{lemma}[The Schur test]
Suppose  that $(X,\nu)$ is a $\sigma$-finite measure space and $K(x,y)$ is a nonnegative
measurable      function on        $X\times X$, and $T$ the associated integral operator
\begin{equation*}
Tf(x) = \int_X K(x,y)\, f(y)\, d\nu(y).
\end{equation*}
Let  $1<p<\infty$ and $1/p + 1/q = 1$. If  there exist a positive constant $C$     and a
positive measurable function $f$  on $X$ such that
\begin{equation*}
\int_X K(x,y)\, f(y)^q\, d\nu(y)\le C\, f(x)^q.
\end{equation*}
for almost every $x\in X$ and
\begin{equation*}
\int_X K(x,y)\, f(x)^p\, d\nu(x)\le C\, f(y)^p.
\end{equation*}
for  almost every $y\in X$, then $T$ is bounded on $L^p  = L^p(X,\nu)$ and the following
estimate of the norm holds
\begin{equation*}
\|T\|_{ L^p\rightarrow L^p} \le C.
\end{equation*}
\end{lemma}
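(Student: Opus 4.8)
The plan is to prove the estimate by splitting the kernel symmetrically with the help of the test function $f$ and then invoking Hölder's inequality. The starting observation is that for any $g\in L^p$ one may factor the integrand of $Tg$ as $K(x,y)|g(y)| = \bigl(K(x,y)^{1/q}f(y)\bigr)\cdot\bigl(K(x,y)^{1/p}|g(y)|/f(y)\bigr)$, which is legitimate since $f>0$ everywhere and $K\ge0$. Applying Hölder's inequality with conjugate exponents $q$ and $p$ in the variable $y$ then yields
\begin{equation*}
|Tg(x)|\le\left(\int_X K(x,y)f(y)^q\,d\nu(y)\right)^{1/q}\left(\int_X K(x,y)\frac{|g(y)|^p}{f(y)^p}\,d\nu(y)\right)^{1/p}.
\end{equation*}
The first factor is exactly what the first hypothesis controls: it is bounded by $\bigl(Cf(x)^q\bigr)^{1/q}=C^{1/q}f(x)$ for almost every $x$.

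Raising the resulting inequality to the $p$-th power gives $|Tg(x)|^p\le C^{p/q}f(x)^p\int_X K(x,y)|g(y)|^p/f(y)^p\,d\nu(y)$, and I would then integrate over $x$ and interchange the order of integration. Because all quantities are nonnegative and $(X,\nu)$ is $\sigma$-finite, Tonelli's theorem applies and produces
\begin{equation*}
\|Tg\|_p^p\le C^{p/q}\int_X\frac{|g(y)|^p}{f(y)^p}\left(\int_X K(x,y)f(x)^p\,d\nu(x)\right)d\nu(y).
\end{equation*}
Now the second hypothesis bounds the inner integral by $Cf(y)^p$, the factors $f(y)^p$ cancel, and one is left with $\|Tg\|_p^p\le C^{p/q+1}\|g\|_p^p$. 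Since $1/p+1/q=1$ gives $p/q+1=p$, this is precisely $\|Tg\|_p\le C\|g\|_p$, establishing both the boundedness of $T$ on $L^p$ and the norm bound $\|T\|_{L^p\to L^p}\le C$.

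The argument is essentially routine once the factorization is chosen, so the main point requiring care is the bookkeeping of the Hölder exponents: the weight $f(y)^q$ must be paired with the exponent $q$ so that the first hypothesis can be invoked, and correspondingly $f(y)^{-p}$ with the exponent $p$ so that the second applies after the interchange. The only measure-theoretic subtlety is the use of Tonelli to swap the order of integration, which is unconditionally valid here because the integrand is nonnegative and the space is $\sigma$-finite; in particular no integrability of the intermediate quantities need be assumed in advance, since the finiteness of $\|Tg\|_p$ emerges as a conclusion of the chain of inequalities rather than as a hypothesis.
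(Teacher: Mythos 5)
Your proof is correct: the factorization $K|g| = \bigl(K^{1/q}f\bigr)\cdot\bigl(K^{1/p}|g|/f\bigr)$, the application of H\"older's inequality in $y$, the use of Tonelli (valid by nonnegativity and $\sigma$-finiteness), and the exponent bookkeeping $p/q+1=p$ all check out, and you correctly observe that finiteness of $T|g|(x)$ for a.e.\ $x$ falls out of the estimate rather than being needed in advance. The paper itself states this lemma as a well-known result and gives no proof, so there is nothing to compare against; your argument is the standard textbook proof of the Schur test and would serve perfectly well if the paper chose to include one.
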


\subsection{The proof of the main result}
We start now with the proof of our main  result.

Let us first discus the simple case $p=1$.          By the Euler formula  and  the Euler
transform  we have
\begin{equation*}\begin{split}
\sup_{t\in (0,1)} \int_0^1 K_\sigma (s,t)\, d\mu (s) & \ =
\sup_{t\in (0,1)} \mu (1-t)^{\sigma} \int_0^1 s^{\mu-1}\, {_2}F_1 (\lambda, \lambda;\mu; s\, t)\, ds
\\&=\sup_{t\in (0,1)} \mu  (1-t)^{\sigma} \mu^{-1}\,   {_2}F_1(\lambda,\lambda;\mu+1;t)
\\&=\sup_{t\in (0,1)}   {_2}F_1 (\mu+1 - \lambda,\mu+1 -  \lambda ;\mu+1;t)
\\& = \frac {\Gamma (\mu+1 ) \Gamma (2\lambda -\mu-1 )}{\Gamma^2(\lambda)}
= \frac {\Gamma (\mu+1 )}{\Gamma^2 (\lambda )}\Gamma (\sigma )<\infty
\end{split}\end{equation*}
if and only if $\sigma>0$. The last conclusion  follows from

\begin{lemma}\label{LE.MAX}
For reals $y>0$ and $x$ the function  $\, {_2}F_1(x,x;y;r)\, $ is bounded in $(0,1)$
if and only  if  $y>2x$ in which case we have
\begin{equation*}
\sup_{r\in (0,1) }  {_2}F_1(x,x;y;r)\, =
 \, {_2}F_1(x,x;y;1)= \frac { \Gamma(y)\Gamma(y-2x) }{\Gamma^2(y-x)}.
\end{equation*}
\end{lemma}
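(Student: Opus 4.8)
The plan is to exploit the fact that, as a power series in $r$, the function ${}_2F_1(x,x;y;r)$ has nonnegative coefficients. Writing
\begin{equation*}
{}_2F_1(x,x;y;r)=\sum_{k=0}^\infty a_k\, r^k,\qquad
a_k=\frac{(x)_k^2}{(y)_k\, k!},
\end{equation*}
we see that each $a_k\ge 0$, since $(x)_k^2\ge 0$ and, because $y>0$, both $(y)_k>0$ and $k!>0$. Consequently $r\mapsto {}_2F_1(x,x;y;r)$ is nondecreasing on $[0,1)$, so its supremum over $(0,1)$ is the limit as $r\to 1^-$. For a power series with nonnegative coefficients this limit equals the sum of the coefficients in $[0,\infty]$: by monotone convergence (equivalently, Abel's theorem in the convergent case) one has $\lim_{r\to1^-}\sum_k a_k r^k=\sum_k a_k$. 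Thus boundedness on $(0,1)$ is equivalent to convergence of the series $\sum_k a_k={}_2F_1(x,x;y;1)$, and when finite the supremum is exactly this value.

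The convergent direction is then immediate from the facts recalled above. With $a=b=x$ and $c=y$ we have $\Re(c-a-b)=y-2x$, so the criterion quoted for convergence of the series at $|z|=1$ gives convergence precisely when $y>2x$; and in that regime the Gauss identity~\eqref{EQ.GAUSS} evaluates the sum as
\begin{equation*}
{}_2F_1(x,x;y;1)=\frac{\Gamma(y)\,\Gamma(y-2x)}{\Gamma^2(y-x)},
\end{equation*}
which is the claimed value of the supremum. (If $x$ is zero or a negative integer the series terminates, ${}_2F_1(x,x;y;r)$ is a polynomial hence trivially bounded, and the inequality $y>2x$ holds automatically because $y>0\ge 2x$; this degenerate case is therefore consistent with the statement.)

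It remains to show that when $y\le 2x$ the series diverges, forcing the function to be unbounded, and this is the main point requiring work. The plan is to determine the asymptotic size of $a_k$ by Stirling's formula: using $\Gamma(k+\alpha)\sim\Gamma(k)\,k^{\alpha}$ as $k\to\infty$, one finds
\begin{equation*}
a_k=\frac{\Gamma(y)}{\Gamma^2(x)}\cdot\frac{\Gamma(k+x)^2}{\Gamma(k+y)\,\Gamma(k+1)}
\sim \frac{\Gamma(y)}{\Gamma^2(x)}\, k^{\,2x-y-1},
\end{equation*}
with a strictly positive constant in front. Comparison with the $p$-series $\sum k^{2x-y-1}$ shows the series converges exactly when $2x-y-1<-1$, i.e. $y>2x$, and diverges when $y\le 2x$; in particular the boundary case $y=2x$ gives $a_k\sim C/k$, a harmonic-type divergence. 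Hence for $y\le 2x$ we have $\lim_{r\to1^-}{}_2F_1(x,x;y;r)=+\infty$, so ${}_2F_1(x,x;y;r)$ is unbounded on $(0,1)$. The anticipated obstacle is purely the careful handling of this divergence, especially the borderline $y=2x$; the convergent half and the evaluation of the supremum follow directly from the hypergeometric identities already available.
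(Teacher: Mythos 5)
Your proof is correct, and for the unboundedness half it takes a genuinely more elementary route than the paper. Both arguments use the same two ingredients for the convergent case: monotonicity in $r$ (which you justify cleanly from the nonnegativity of the coefficients $(x)_k^2/((y)_k\,k!)$, using $y>0$ and $x$ real) and the Gauss evaluation \eqref{EQ.GAUSS} at $r=1$. Where you diverge is in showing unboundedness for $y\le 2x$: the paper quotes the standard asymptotic expansions of ${}_2F_1(x,x;y;r)$ as $r\to 1^-$, namely a $\log\frac{1}{1-r}$ blow-up when $y=2x$ and a $(1-r)^{y-2x}$ blow-up when $y<2x$, citing Andrews--Askey--Roy; you instead reduce the whole question, via monotone convergence, to convergence of $\sum_k a_k$ and settle that by the Stirling estimate $a_k\sim \frac{\Gamma(y)}{\Gamma^2(x)}k^{2x-y-1}$ and comparison with a $p$-series. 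Your route is self-contained (no quoted expansions beyond $\Gamma(k+\alpha)\sim\Gamma(k)k^{\alpha}$), treats the borderline case $y=2x$ and the terminating case $x\in\{0,-1,-2,\dots\}$ explicitly, and cleanly identifies the supremum with the sum of the series; the paper's route is shorter on the page and yields the precise rate of blow-up near $r=1$, which is stronger information than mere divergence, at the cost of relying on the cited asymptotics. Both are valid proofs of the lemma as stated.
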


\begin{proof}
If  $y>2x$, then  $\, {_2}F_1(x,x;y;r)\, $ is continuous in $[0,1]$ and increasing in
$r\in(0,1)$.    Therefore, we may apply the   Gauss relation to   obtain  the maximum.
In other cases we have
\begin{equation*}
{_2}F_1(x,x;y;r)  \sim  \frac {\Gamma\left(2x\right)}
{\Gamma^2\left(x\right)}\log \frac 1{1-r} \quad \text{if}\quad  y=2x,
\end{equation*}
and
\begin{equation*}
{_2}F_1(x,x;y;r) \sim \frac {\Gamma (y)\Gamma(2x - y)}
{\Gamma^2(x)} \frac 1{(1-r)^{2x-y}} \quad \text{if}\quad  y<2x.
\end{equation*}
For these asymptotic relations we refer to the second chapter in~\cite{AAR.BOOK.SPECIAL}.
\end{proof}

Now,         according to Lemma~\ref{LE.NORM.L1} we conclude
\begin{equation*}
\|{F_\sigma}\|_{ L^1_\mu (0,1)\rightarrow  L^1_\mu (0,1)}=
 \frac {\Gamma (\mu+1 )}{\Gamma^2 ( \lambda)}\Gamma (\sigma )
\end{equation*}
for all $\sigma>0$.

The rest of the proof of  our main result is devoted to the case $1<p<\infty$. It has  two
parts.  In  the first one,  using the Schur test, we   prove  that $F_\sigma$ is   bounded
for $\sigma> 1/p - 1$, and we obtain the estimate of the norm of ${F}_\sigma$ from   above.
In the    second part we  deliver the proof that the same number      is the norm estimate
from below.    In this part we also obtain that the condition $\sigma> 1/p-1$ is necessary
for the  boundedness of $F_\sigma$.

\subsubsection*{Part I}
To  start with this,  denote $\varphi(t)=(1-t)^{- 1/{p  q}}$. Assume that $\sigma> 1/p-1$.
We have
\begin{equation*}\begin{split}
&\int_0^1 K_\sigma(s,t)\, \varphi(t)^q\, d\mu(t) \\&=
\mu \int_0^1  t^{\mu-1}\, (1-t)^\sigma {_2}F_1(\lambda,\lambda;\mu;s\, t)\, \varphi(t)^q\, dt
\\& = \mu\, \frac {\Gamma(\mu) \Gamma(2\lambda-\mu- 1/p)}  {\Gamma(2\lambda-1/p)} \,  {_2}F_1(\lambda,\lambda;2\lambda- 1/p;s)
\\& = \frac {\Gamma(\mu+1) \Gamma(2\lambda-\mu- 1/p)}  {\Gamma(2\lambda- 1/p)}\,  (1-s)^{1/p}\,
{_2}F_1(\lambda,\lambda;2\lambda- 1/p;s)\, \varphi(s)^q
\\& =  \frac {\Gamma(\mu+1) \Gamma(2\lambda-\mu-1/p)}  {\Gamma(2\lambda-1/p)}\,
{_2}F_1(\lambda- 1/p, \lambda-  1 /p;2\lambda-  1/p;s)\, \varphi(s)^q
\\&\le \frac {\Gamma(\mu+1) \Gamma(2\lambda-\mu-1/p)} {\Gamma(2\lambda- 1/p)}
\,  \frac {\Gamma(2\lambda- 1/p) \Gamma( 1/p)}{\Gamma^2(\lambda)}\, \varphi(s)^{q}
\\&= \frac {\Gamma(\mu+1)} {\Gamma^2(\lambda)}\Gamma(\sigma+1-1/p)\Gamma(1/p)\, \varphi(s)^{q}.
\end{split}\end{equation*}
Since $(1-t)^\sigma \varphi(t)^q = (1-t)^{(2\lambda -1/p)-\mu-1}$, at the second place we
used       the Euler formula~\eqref{EQ.INT.TRANS.1} for   $c= 2\lambda- 1/ p$ and $d=\mu$
(observe that $c-d = \sigma+1/q> 1/ p- 1+ 1/ q =0$). At the third place we used       the
Euler  transform~\eqref{EQ.SIMPLE.TRANSFORM}.  The inequality follows                  by
Lemma~\ref{LE.MAX}, since the function ${_2}F_1(\lambda- 1/p,\lambda-1/p;2\lambda-1/p;s)$
is   increasing in $s\in (0,1)$.

Similarly, one derives
\begin{equation*}\begin{split}
&\int_0^1  K_\sigma(s,t)\, \varphi(s)^p\, d\mu(s)
\\&=\mu (1-t)^\sigma \int_0^1 s^{\mu-1}\,  {_2}F_1(\lambda,\lambda;\mu;s\, t)\, \varphi(s)^p \, ds
\\&=\mu (1-t)^{\sigma} \frac{\Gamma(\mu) \Gamma(1/p)} {\Gamma(\mu+1/p)}
\, {_2}F_1(\lambda,\lambda;\mu+ 1/p;t)
\\&=  \frac{\Gamma(\mu+1)  \Gamma(1/p)} {\Gamma(\mu+ 1/p)} (1-t)^{2\lambda-\mu- 1/p}
\, {_2}F_1(\lambda,\lambda;\mu+ 1/p;t)\, \varphi(t)^{p}
\\& = \frac{\Gamma(\mu+1)  \Gamma(1/p)} {\Gamma(\mu+ 1/p)}
\, {_2}F_1(\mu- \lambda+ 1/p,\mu -\lambda+  1/p;\mu+ 1/p;t)\, \varphi(t)^{p}
\\& =  \frac{\Gamma(\mu+1)  \Gamma(1/p)} {\Gamma(\mu+ 1/p)}\,
{_2}F_1(\mu  - \lambda+ 1/p,\mu - \lambda+ 1/p;\mu+ 1/p;t)\, \varphi(t)^{p}
\\&\le \frac{\Gamma(\mu+1) \Gamma(1/p)} {\Gamma(\mu+1/p)} \frac {\Gamma(\mu+ 1/p) \Gamma(2\lambda - \mu -1/p)}
{\Gamma^2(\lambda)}\, \varphi(t)^p
\\&=\frac {\Gamma(\mu+1)}{\Gamma^2(\lambda)}\Gamma(1/p) \Gamma(\sigma+1-1/p)\, \varphi(t)^p.
\end{split}\end{equation*}

By the Schur test we finally  obtain
\begin{equation*}
\|{F}_\sigma\|_ {L^p_\mu(0,1)\rightarrow L^p_\mu (0,1)}\le
\frac {\Gamma(\mu +1)}{\Gamma^2(\lambda)}\Gamma(1/p) \Gamma(\sigma+1- 1/p)
\end{equation*}
for every  $\sigma> 1/p -1$.

\subsubsection*{Part II}
As   we have said, the     aim of the second part is to establish the norm  estimate  of
$F_\sigma\,  (1<   p<\infty,\, \sigma>1/p-1)$         from  below. The  following   four
observations   will be useful in that approach.

i) If    $H(t)=C\, t^{ \theta/ p} (1-t)^{{\tilde \theta}/p}$, where    $C$ is a positive
constant,       then $H \in L^p_\mu(0,1),\,   1<p<\infty$     and $\|H \|_{p,\mu} =1$ if
and only if $\theta>-\mu,\, \tilde{\theta}>  -1$,                                    and
\begin{equation*}
C = \mu^{- 1/p} \mathrm{B}(\theta+\mu,\tilde{\theta}+1)^{-1/p}.
\end{equation*}
Indeed,  this follows from the simple  computation
\begin{equation*}\begin{split}
1  &  = \|H\|_{p,\mu}^p =\mu \int_0^1 H(t)^p  t^{\mu -1} dt
  = \mu  C^p  \int_0^1  t^{\theta+\mu-1}  (1-t)^{\tilde{\theta}}  dt
\\&=  \mu  C^p  \mathrm{B}(\theta+\mu ,\tilde{\theta}+1).
\end{split}\end{equation*}

ii) There holds  the identity
\begin{equation}\label{LE.INT.TRANS.2}
\int_0^1  t^{c-1}  (1-t)^{d-1}\, {_2}F_1 (a, b; c; t)\, dt =  \frac{\Gamma (c) \Gamma(d)
\Gamma(c+d-a-b)}{\Gamma(c+d-a) \Gamma(c+d-b)}
\end{equation}
for $\Re c>0,\, \Re d>0$ and $\Re(c+d-a-b)>0$.   We refer to~\cite{LIU.ZHOU.IEOT} for a
proof.

iii)
Let $l>0$ and let $G$ be any      function defined in an interval $(0,l)$  with positive
values.   For  every  $1<p<\infty$ we have
\begin{equation}\label{LE.LIMSUP}
\limsup_{(\zeta,\eta)\rightarrow (0,0)}
\frac{G(\eta)^{-1/p} G(( {\zeta+\eta})/p)} {G({\zeta}/({p-1}))^{1-1/p}}\ge 1.
\end{equation}
It is enough to note that if we set  $\eta=\zeta/({p-1})$,  then we have $(\zeta+\eta)/p
=\zeta/ (p-1)  =\eta$,  and therefore
\begin{equation*}
\frac{G(\eta)^{- 1/p} G(( {\zeta+\eta})/ p  )} {G({\zeta}/({p-1}))^{1- 1/p}}
= \frac{G(\eta)^{- 1/p} G(\eta)} {G(\eta)^{1-  1/p}}=  1,
\end{equation*}
what                                   immediately  implies the statement of this lemma.

iv) If        $L^p=L^p(X,\nu)$  is a Lebesgue space, recall that for an operator $T:L^p
\rightarrow L^p\, (1< p<\infty)$ we  have
\begin{equation*}\begin{split}
&\|T\|_{ L^p\rightarrow  L^p}
\\&= \sup \left\{ \left|\int _X T\Phi(x)\, \overline{\Psi(x)}\, d\nu(x)\right|: f\in\Phi
\in  L^p,\,   \Psi\in L^q,\,   \|\Phi\|_p=\|\Psi\|_q=1\right\},
\end{split}\end{equation*}
where $q$ is conjugate to $p$, i.e., $q= { p}/({p-1})$.

Therefore, for our    operator  $F_\sigma$ we will  calculate
\begin{equation*}
\int _0^1  F_\sigma \Phi(s)\, \overline{\Psi(s)}\,  d\mu (s)
\end{equation*}
for  appropriate  $\Phi (s)\in L^p_\mu(0,1)$ and $\Psi (t)\in L^q_\mu (0,1)$.  First of
all, using the Fubini theorem we obtain
\begin{equation*}\begin{split}
&\int _0^1 F_\sigma  \Phi(s)\,  \overline {\Psi(s)}\, d\mu(s)
\\& = \mu^2  \int_0^1 s^{\mu-1}   \left\{\int_0^1  t^{\mu-1}  (1-t)^{\sigma}\,
 {_2}F_1(\lambda,\lambda ;\mu;s\, t)\, \Phi (t)\, dt\right\} \overline{\Psi (s)}\, ds
\\&=\mu^2 \int_0^1  t^{\mu-1}  (1-t)^{\sigma} \left\{\int_0^1  s^{\mu-1}\, {_2}F_1(\lambda,\lambda;\mu;s\, t)\,
\overline{\Psi (s)}\, ds\right\} \Phi (t)\, dt.
\end{split}\end{equation*}
In              the  preceding relation we  will  take for  $\Phi(t)$ and   $\Psi(s)$
the  functions of the following  form
\begin{eqnarray*}
\Phi(t) = C \, t^{ \theta/ p}  (1-t)^{{\tilde{\theta}}/p},\quad
\Psi (s) = \tilde{C}\, s^{ \vartheta/ q}  (1-s)^{{\tilde{\vartheta}}/q}.
\end{eqnarray*}
We must have $\theta,\, \vartheta>-\mu$, and $\tilde{\theta},\, \tilde{\vartheta}>-1$,
as well as
\begin{equation*}
{C}^p  =\mu^{-1}  \mathrm{B}(\theta+\mu,\tilde{\theta}+1)^{-1},
\quad \tilde{C}^q = \mu^{-1}  \mathrm{B}(\vartheta+\mu,\tilde{\vartheta}+1)^{-1}.
\end{equation*}
In the sequel we will chose $\theta,\, \tilde{\theta}$ and $\vartheta,\, \tilde{\vartheta}$
in the way that it makes simpler the calculation of integrals in    the expression for
$\int _0^1 F_\sigma \Phi(s) \overline{\Psi(s)}\, d\mu(s)$.

Introducing           the preceding type of functions with $\vartheta=0$  we obtain
\begin{equation*}\begin{split}
&\int_0^1  s^{\mu-1}\,  {_2}F_1(\lambda,\lambda;\mu;s\, t)\, \overline{\Psi (s)}\, ds
\\&=\tilde{C}\, \int_0^1 s^{\mu-1}  (1-s)^{{\tilde{\vartheta}}/q}\,  {_2}F_1(\lambda,\lambda;\mu;s\, t)\, ds
\\&=\tilde{C}\, \int_0^1  s^{\mu-1}  (1-s)^{({\tilde{\vartheta}}/q+\mu+1)-\mu-1}
\, {_2}F_1(\lambda,\lambda;\mu;s\, t)\, ds
\\& = \tilde{C}\,  \frac{\Gamma(\mu)  \Gamma( {\tilde{\vartheta}}/q+1)}
{\Gamma({\tilde{\vartheta}}/q+\mu+1)}\, {_2}F_1(\lambda,\lambda;{\tilde{\vartheta}}/q+\mu+1;t),
\end{split}\end{equation*}
where   we have used~\eqref{EQ.INT.TRANS.1} for $ {c}={\tilde{\vartheta}}/q+\mu+1$
and ${d}=\mu$;         note ${c}-{d} = {\tilde{\vartheta}}/q+1>-{1}/ q+1 = 1/p> 0$.

For the sake of  simplicity in the following calculation  we will take  $\tilde{\vartheta} =
({\theta-p})/({p-1})$.  Then we have
\begin{equation*}
{\tilde{\vartheta}} /q+\mu +1   =  \mu + \theta/p.
\end{equation*}
Since we must have $\tilde{\vartheta}>-1$, it follows that $\theta>1$.

Now, it remains to transform
\begin{equation*}\begin{split}
&\int_0^1 t^{\mu-1} (1-t)^{\sigma}\,   {_2}F_1(\lambda,\lambda;{\tilde{\vartheta}}/q+\mu+1;t)\, \Phi(t)\, dt
\\&=C\, \int_0^1 t^{(\mu + \theta/ p)-1} (1-t)^{({\tilde{\theta}}/p+\sigma +1)-1}\,
{_2}F_1(\lambda,\lambda;\mu +\theta/p;t)\, dt
\\&= C \, \frac {\Gamma(\mu+  \theta/ p )  \Gamma({\tilde{\theta}}/ p+\sigma +1) \Gamma((\theta  + {\tilde{\theta}})/p)}
{\Gamma^2((\theta  +  {\tilde{\theta}})/p + \lambda)}.
\end{split}\end{equation*}
We have used~\eqref{LE.INT.TRANS.2}; note that $(\mu+\theta/p)  + ({\tilde{\theta}}/p+\sigma +1) - \lambda
= (\theta  +  {\tilde{\theta}})/p>0$.

All together  we have
\begin{equation*}\begin{split}
&\int_0^1 F_\sigma  \Phi(s)\,  \overline{\Psi(s)}\, d\mu (s)
\\&= \mu \, C \, \tilde{C}\,
\frac {\Gamma(\mu+1)  \Gamma(\mu+ {{\theta}}/p)  \Gamma( {\tilde{\theta}}/p+ \sigma +1)
\Gamma((\theta  + {\tilde{\theta}})/p)}{\Gamma^2((\theta  + {\tilde{\theta}})/p +\lambda)}.
\end{split}\end{equation*}

In           the sequel we assume that  $\sigma>1/p-1$ (note that  in this case we have
${\tilde{\theta}}/p+\sigma +1>0$). If  $\sigma\le 1/p-1$, then   $F_\sigma:L^p_\mu(0,1)
\rightarrow L^p_\mu(0,1)$ is not well defined.

Since $\vartheta = 0 $ and $\tilde{\vartheta} =({\theta-p})/({p-1})$  we obtain
\begin{equation*}\begin{split}
C\, \tilde{C}
&=  \mu ^{-  1/p}  {\mathrm B}(\theta+\mu,\tilde{\theta}+1)^{- 1/p}  \mu^{-  1/q}  {\mathrm B}(\vartheta+\mu,\tilde{\vartheta}+1)^{1/p-1}
\\&\sim \mu^{-1} \frac {\Gamma(\tilde{\theta}+1)^{-  1/p}} {\Gamma (({\theta-1})/ ({p-1}) )^{1- 1/p}},
\end{split}\end{equation*}
as ${(\theta,\tilde{\theta})\rightarrow (1,-1)}$.
Now, regarding~\eqref{LE.LIMSUP} it follows
\begin{equation*}\begin{split}
&\limsup_{(\theta,\tilde{\theta})\rightarrow (1,-1)}  \int_0^1  F_\sigma  \Phi(s)\, \overline{\Psi(s)}\, d\mu (s)
\\& = \frac {\Gamma (\mu+1)}{\Gamma^2(\lambda)}\Gamma( 1/p) \Gamma(-1/p+\sigma +1)
\limsup_{(\theta,\tilde{\theta})\rightarrow (1,-1)}
\frac {\Gamma(\tilde{\theta}+1)^{-1/p} \Gamma((\theta +{\tilde{\theta}})/p)}{\Gamma(({\theta-1})/({p-1}))^{1-1/p}}
\\&= \frac { \Gamma(\mu+1)}
{\Gamma^2 (\lambda )}\Gamma(1/p)  \Gamma(-1/p+\sigma +1)  \limsup_{(\zeta,\eta)\rightarrow (0,0)}
\frac {\Gamma(\eta)^{-1/p} \Gamma( (\zeta +\eta)/ p)}{\Gamma({\zeta}/({p-1}) )^{1-1/p}}
\\&\ge\frac { \Gamma(\mu+1)}{\Gamma^2 (\lambda)}\Gamma(1/p)\Gamma(-1/p+\sigma +1).
\end{split}\end{equation*}
Thus,  we have proved
\begin{equation*}\begin{split}
\|F_\sigma\|_{L^p_\mu(0,1)\rightarrow L^p_\mu(0,1)}&\ge
\limsup_{(\theta,\tilde{\theta})\rightarrow (1,-1)} \int _0^1  F_\sigma  \Phi(s)\,  \overline{\Psi(s)} \, d\mu (s)
\\&\ge\frac {\Gamma(\mu+1)}{\Gamma^2 ( \lambda)}\Gamma( 1/p)\Gamma(- 1/p+\sigma +1)
\end{split}\end{equation*}
for   $\sigma>1/p-1$.

\section{The Bergman projection and the Berezin transform}
\subsection{The Bergman projection}
In the sequel  $n$  will  be a positive integer.  Let
\begin{equation*}
\left<z,w\right>= z_1\overline{w}_1+\dots+z_n\overline{w}_n,
\end{equation*}
stand for the  inner product in the complex $n$-dimensional space  $\mathbf{C}^n$, where
$z=(z_1,\dots,z_n)$  and $w=(w_1,\dots,w_n)$.       The  standard norm in $\mathbf{C}^n$,
induced by the inner product, is denoted by $|z|=\sqrt{\left<z,z\right>}$.  We denote by
$B$ the unit ball   $\{z\in\mathbf C^n:|z|<1\}$ in $\mathbf{C}^n$.   Let $S= \partial B$
be the unit sphere. The normalized Lebesgue measure on the unit ball (sphere) is denoted
by $dv\, (d\tau)$.

Following  the Rudin  monograph~\cite{RUDIN.BOOK.BALL} as well as the Forelli and Rudin
work~\cite{FORELLI.RUDIN.INDIANA},  associate with each complex number  $s=\sigma+it,\,
\sigma>-1$ the integral kernel
\begin{equation*}
K_s(z,w)=\frac{\left(1-|w|^2\right)^s}{\left(1-\left<z,w\right>\right)^{n+1+s}},
\end{equation*}
and let
\begin{equation*}
T_s f(z) = c_s \int_B K_s(z,w)\, f(w)\, dv(w),\quad z\in B.
\end{equation*}
We understand $f(w)$ is a such one function in  $B$ that the previous integral  is well
defined,      and the complex power is understood to be the principal   branch.     The
operator $T_s$   is   the Bergman projection. The  coefficient $c_s$  is  chosen in the
way that for the weighted    measure in the unit ball
\begin{equation*}
dv_s(w)=c_s   (1-|w|^2 )^s dv(w)
\end{equation*}
there  holds $v_s(B)=1$, i.e., $T_s 1=1$.                   Using the polar coordinates
\begin{equation*}
\int_B h(z)\, dv(z) = 2n \int_0^1 r^{2n-1} dr \int_S h(r\zeta)\,  d\tau(\zeta),
\end{equation*}
one can show that
\begin{equation*}
c_s^{-1}     = n  \mathrm {B} (s+1,n) = \frac{\Gamma(s+1)
\Gamma(n+1)}{\Gamma(n+s+1)},
\end{equation*}
where $\Gamma$ and $\mathrm{B}$ are Euler functions.

Let $L^p(B)\, (1\le p<\infty)$ stand  for the Lebesgue space of all measurable functions
in the unit ball of $\mathbf{C}^n$ which modulus  with the  exponent   $p$ is integrable.
For $p=\infty$    let it be the space of all  essentially  bounded measurable  functions.
Denote       by $\|\cdot\|_p$ the usual  norm on  $L^p(B)\,  (1\le p\le \infty)$. Recall
that
\begin{equation*}
\|f\|_p^p \ = \int_{B}  |f(z)|^p\, dv(z)
\end{equation*}
for $f\in L^p(B)\, (1\le p<\infty)$.

Forelli    and    Rudin~\cite{FORELLI.RUDIN.INDIANA} proved that   $T_s:L^p(B) \rightarrow
L^p_a = L^p\cap H(B)$, where $H(B)$  is the space   of all analytic functions in  the unit
ball, is a bounded  (and surjective)   operator if and only if $\sigma >  1/p -1$,   where
$1\le p<\infty$. Moreover,   they find        $\|T_s\|_{L^1(B)\rightarrow L_a^1(B)}$   for
$\sigma>0$   and $\|T_s\|_{ L^2(B)\rightarrow L^2_a(B)}$ for $\sigma>- 1/2$. It seems that
the  calculation of $\|T_s\|_{ L^p(B)\rightarrow L^p_a(B)}$ in other cases is not  an easy
problem.

On  the other hand, if  $p=\infty$,  it is known~\cite{CHOE.PAMS}     that the   operator
$T_\sigma\, (\sigma>-1)$ projects  $L^\infty(B)$  continuously  onto the Bloch      space
$\mathcal{B}$  of  the   unit ball in  $\mathbf{C}^n$.    Recall that the  Bloch    space
$\mathcal{B}$   contains   all  functions $f$   analytic in $B$  for which the semi--norm
$\|f\|_{\beta}   =    \sup_{z\in B}    \left(1-|z|^2\right) \left|\nabla f(z)\right|$  is
finite.   One can obtain a true norm by adding $|f(0)|$,           more precisely  in the
following way
\begin{equation*}
\|f\|_\mathcal{B}=|f(0)|+\|f\|_\beta,\quad f\in\mathcal{B}.
\end{equation*}

The $\beta$-(semi-)norm of $T_\sigma:L^{\infty}(B)\rightarrow \mathcal{B}$ is defined by
\begin{equation*}
\|T_\sigma\|_\beta   \  = \sup_{\|f\|_\infty\le 1}\|T_\sigma f\|_\beta.
\end{equation*}
In~\cite{KALAJ.MARKOVIC.MATH.SCAND}     we find the (semi-)norm of $T_\sigma$      w.r.t.
the $\beta$-(semi-)norm. We obtained
\begin{equation*}
\|T_\sigma\|_\beta = \frac{\Gamma(2\lambda+1)}{\Gamma^2(\lambda +1/2)},
\end{equation*}
where  we have introduced
\begin{equation*}
\lambda =(n+\sigma+1)/2.
\end{equation*}
Following  the approach as in~\cite{PERALA.AASF.2013}, one     can derive
\begin{equation*}
\|T_\sigma\|_{{L^\infty(B)\rightarrow \mathcal{B}} }= 1+\|T_\sigma\|_\beta = 1+\frac{\Gamma(2\lambda+1)}
{\Gamma^2(\lambda+1 /2)}.
\end{equation*}
Particulary, for  $\sigma=0$ and $n=1$ we put $P = T_0$ and  $B=\mathbf{U}$ (then we have
the original Bergman projection). Per\"{a}l\"{a}   proved that
\begin{equation*}
\|P\|_\beta =\frac 8\pi\quad\text{and}\quad\|P\|_{L^\infty(\mathbf{U})\rightarrow\mathcal{B}}  =
1 + \frac 8\pi,
\end{equation*}
which are the main      results  from~\cite{PERALA.AASF.2012} and~\cite{PERALA.AASF.2013},
respectively.            For a   related result we refer to~\cite{KALAJ.VUJADINOVIC.JOT}.

\subsection{The maximal Bergman projection}
Beside  the operator                $T_\sigma$ we will consider now the integral operator
$\tilde{T}_\sigma\,  (\sigma>-1)$                                     given by the kernel
\begin{equation*}
|K_\sigma(z,w)| = \frac{\left(1-|w|^2\right)^\sigma}{\left|1-\left<z,w\right>\right|^{2\lambda}},
\end{equation*}
i.e.,
\begin{equation*}
\tilde{T}_\sigma f(z)=
c_\sigma \int_B \frac{\left(1-|w|^2\right)^\sigma}{\left|1-\left<z,w\right>\right|^{2\lambda}}\, f(w)\, dv(w),\quad z\in B.
\end{equation*}
It       is  known that $\tilde{T}_\sigma$  maps  $L^p(B)\, (1 \le p<\infty)$ into itself
continuously                                 if    and only  if     and $\sigma>  1/p-1$;
see~\cite{FORELLI.RUDIN.INDIANA} where  Forelli and Rudin used this operator  in order to
establish  the continuity of $T_\sigma$.

In order to connect  our main result with the Forelli--Rudin result, we need to transform
the integral $I_c(z)$ which appears in the first  chapter                    of the Rudin
monograph~\cite{RUDIN.BOOK.BALL}. For the proof of the next lemma  see Proposition 1.4.10
in~\cite{RUDIN.BOOK.BALL}.

\begin{lemma}\label{LE.I.C}
For  any real number $c$ introduce
\begin{equation*}
I_c (z) \, = \int_S \frac{d\tau(\zeta)}{\left|1-\left<z,\zeta\right>\right|^{n+c}},\quad
z\in B.
\end{equation*}
Then
\begin{equation*}
I_c(z)\, =\, {_2}F_1(\tilde{\lambda},\tilde{\lambda};n;|z|^2),
\end{equation*}
where $\tilde{\lambda} = ( n +  c)/2$.
\end{lemma}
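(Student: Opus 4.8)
The plan is to use the unitary invariance of the surface measure $d\tau$ to collapse the integral to a one-variable moment computation, and then to recognize the resulting power series as the Gauss hypergeometric function. Since $d\tau$ is invariant under the unitary group and $\langle Uz,\zeta\rangle=\langle z,U^{*}\zeta\rangle$, the value $I_c(z)$ depends only on $|z|$; choosing a unitary $U$ with $Uz=(r,0,\dots,0)$ for $r=|z|$, I may assume $z=(r,0,\dots,0)$, so that $\langle z,\zeta\rangle=r\overline{\zeta_1}$. Writing $n+c=2\tilde\lambda$, the integrand factors as
\[
\frac{1}{|1-r\overline{\zeta_1}|^{2\tilde\lambda}}=(1-r\zeta_1)^{-\tilde\lambda}\,(1-r\overline{\zeta_1})^{-\tilde\lambda},
\]
each factor taken in its principal branch.

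Next I would expand both factors by the binomial series $(1-w)^{-\tilde\lambda}=\sum_{k\ge0}\frac{(\tilde\lambda)_k}{k!}w^{k}$, which converges absolutely and uniformly on $S$ because $|r\zeta_1|\le r<1$ there. Multiplying the two series and integrating term by term, the off-diagonal contributions $\int_S\zeta_1^{k}\overline{\zeta_1}^{\,j}\,d\tau$ vanish for $j\ne k$, since the unitary rotation $\zeta_1\mapsto e^{i\theta}\zeta_1$ preserves $d\tau$ and multiplies the integrand by $e^{i(k-j)\theta}$. What survives is the diagonal
\[
I_c(z)=\sum_{k=0}^{\infty}\frac{(\tilde\lambda)_k^{2}}{(k!)^{2}}\,r^{2k}\int_S|\zeta_1|^{2k}\,d\tau(\zeta).
\]

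It then remains to evaluate the moments $m_k=\int_S|\zeta_1|^{2k}\,d\tau$, and this is the one genuinely nontrivial step. The cleanest route is to push $d\tau$ forward under $\zeta\mapsto\zeta_1$, obtaining the measure $\frac{n-1}{\pi}(1-|w|^{2})^{n-2}\,dA(w)$ on the unit disk, where $dA$ is planar Lebesgue measure; in polar coordinates the angular integration is immediate and the radial one is a Beta integral, yielding $m_k=(n-1)\,\mathrm{B}(k+1,n-1)=\frac{k!\,(n-1)!}{(n+k-1)!}=\frac{k!}{(n)_k}$. Substituting this back gives $I_c(z)=\sum_{k\ge0}\frac{(\tilde\lambda)_k^{2}}{(n)_k\,k!}\,r^{2k}$, which is exactly ${_2}F_1(\tilde\lambda,\tilde\lambda;n;r^{2})$; since $r^{2}=|z|^{2}$, the lemma follows. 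Everything apart from the moment formula $m_k=k!/(n)_k$ is bookkeeping with absolutely convergent series, so that computation (equivalently, the pushforward formula for $d\tau$) is where the real content lies.
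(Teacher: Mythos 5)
Your proof is correct. Note that the paper itself does not prove this lemma at all: it simply refers the reader to Proposition 1.4.10 of Rudin's monograph, so there is no in-paper argument to compare against. What you have written is essentially the standard derivation behind that citation: reduce to $z=(r,0,\dots,0)$ by unitary invariance, factor $|1-r\overline{\zeta_1}|^{-2\tilde\lambda}=(1-r\zeta_1)^{-\tilde\lambda}(1-r\overline{\zeta_1})^{-\tilde\lambda}$, expand both binomial series (absolutely and uniformly, since $|r\zeta_1|\le r<1$), kill the off-diagonal terms by the rotation $\zeta_1\mapsto e^{i\theta}\zeta_1$, and evaluate the diagonal moments $m_k=\int_S|\zeta_1|^{2k}\,d\tau=k!/(n)_k$, which assembles into $\sum_k\frac{(\tilde\lambda)_k^2}{(n)_k\,k!}r^{2k}={_2}F_1(\tilde\lambda,\tilde\lambda;n;r^2)$. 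All steps check out, including the branch issue in the factorization (the base $1-r\zeta_1$ has positive real part, so the principal powers multiply as claimed for real $\tilde\lambda$). The only cosmetic caveat is that your pushforward formula $\frac{n-1}{\pi}(1-|w|^2)^{n-2}\,dA(w)$ for the slice measure is valid for $n\ge 2$; for $n=1$ the sphere is the circle and $m_k=\frac{1}{2\pi}\int_0^{2\pi}|e^{i\theta}|^{2k}\,d\theta=1=k!/(1)_k$ directly, so the final identity still holds. You might add that one-line remark, but otherwise the argument is complete and supplies exactly the proof the paper outsources.
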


The following  simple observations   will be useful.

Let $h(w) =H(|w|^2)$ be a radially symmetric function in the unit ball, where $H(t)$      is
defined in the interval $(0,1)$ and non--negative.

i) Norm of $h\in L^p (B)$ is given by
\begin{equation*}
\|h\|_p^p\ = n \int_0^1\, s^{n-1}\, H(s)^p\, ds = \|H\|_{p,n}^p
\end{equation*}
for  $1\le p<\infty$. Indeed,  using polar coordinates  we obtain
\begin{equation*}\begin{split}
\int_B h(z)\, dv(z)& = 2n  \int_0^1 r^{2n-1}\, dr  \int_S  h(r\zeta)\, d\tau(\zeta)
\\&= 2n \int_0^1\, r^{2n-1}\, H (r^2)\, dr
=n \int_0^1\, s^{n-1}\, H (s)\, ds.
\end{split}\end{equation*}

ii) The  function  $\tilde{T}_\sigma  h (z)$ is also radially symmetric,  if it is defined.
Moreover,
\begin{equation*}\begin{split}
\tilde{T}_\sigma h(z)&  = c_\sigma\, n \, \int_0^1  t^{n-1}   (1-t )^\sigma\,
{_2}F_1(\lambda,\lambda; n;t\,  |z|^2)\, H(t)\, dt
\\& = c_\sigma  F_\sigma H (|z|^2).
\end{split}\end{equation*}
To see that this relation holds, use  polar coordinates and Lemma~\ref{LE.I.C} to   obtain
\begin{equation*}\begin{split}
c_\sigma^{-1} \tilde {T}_\sigma h(z)&
=2n \int_0^1  r^{2n-1}\,  (1-r^2)^\sigma\, I_{2\lambda-n}(rz)\, H (r^2)\, dr
\\&= n  \int_0^1  s^{n-1}\,  (1-s)^\sigma\, {_2}F_1(\lambda,\lambda;n;s|z|^2)\, H (s)\, ds
\end{split}\end{equation*}
(we introduced $s=r^2$ to obtain the last integral).

Thus,  the operator   $\tilde{T}_\sigma : L^p(B)\rightarrow L^p(B)$ is bounded if and only
if $F_\sigma: L^p_n (0,1) \rightarrow L^p_n (0,1)$ is   bounded. Moreover,
\begin{equation*}
\|\tilde{T}_\sigma\|_{L^p(B)\rightarrow L^p(B)} = c_\sigma  \|F_\sigma\|_{ L^p_n (0,1) \rightarrow L^p_n (0,1)}
\end{equation*}
for all $1\le p<\infty$ and $\sigma> 1/p-1$.

Therefore, we have

\begin{theorem}\label{TH.MAIN}
The operator    $\tilde{T}_\sigma$ is bounded if and only if $\sigma> 1/p-1$. Norm  of
$\tilde {T}_\sigma: L^p(B)\rightarrow L^p(B)\, (1\le p<\infty)$ is
\begin{equation*}\begin{split}
\|\tilde {T}_\sigma\|_{ L^p(B)\rightarrow L^p(B)} &
=\frac{\Gamma(2\lambda)}{\Gamma^2(\lambda)\Gamma(\sigma+1)}\Gamma(1/p)\Gamma(\sigma+1-1/p)
\\&= \frac{\Gamma(n+\sigma+1)}
{{\Gamma^2({(n+\sigma+1)/2})}\Gamma(\sigma+1)}\Gamma(1/p)\Gamma(\sigma + 1- 1/p)
\end{split}\end{equation*}
for all $\sigma>1/p-1$.
\end{theorem}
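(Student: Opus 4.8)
The plan is to read the statement off directly from the reduction identity established just above together with the main Theorem of Section~1, so that essentially only a bookkeeping of Gamma factors remains. Recall that the observations preceding the theorem give
\begin{equation*}
\|\tilde{T}_\sigma\|_{L^p(B)\rightarrow L^p(B)} = c_\sigma\, \|F_\sigma\|_{L^p_n(0,1)\rightarrow L^p_n(0,1)},
\end{equation*}
valid for $1\le p<\infty$ and $\sigma>1/p-1$, and in particular that $\tilde{T}_\sigma$ is bounded precisely when $F_\sigma$ is. Hence the equivalence ``$\tilde{T}_\sigma$ bounded $\Leftrightarrow\sigma>1/p-1$'' is immediate from the main Theorem applied with $\mu=n$, and only the value of the norm is left to compute.

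First I would invoke the main Theorem with the choice $\mu=n$. A point worth checking is that the parameter $\lambda=(\mu+\sigma+1)/2$ appearing in Section~1 becomes exactly $\lambda=(n+\sigma+1)/2$, which is the $\lambda$ used to define $\tilde{T}_\sigma$; so the two occurrences of the symbol $\lambda$ are consistent, and the Theorem yields
\begin{equation*}
\|F_\sigma\|_{L^p_n(0,1)\rightarrow L^p_n(0,1)} = \frac{\Gamma(n+1)}{\Gamma^2(\lambda)}\,\Gamma(1/p)\,\Gamma(\sigma+1-1/p).
\end{equation*}

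Next I would substitute the explicit normalizing constant computed above, namely
\begin{equation*}
c_\sigma = \frac{\Gamma(n+\sigma+1)}{\Gamma(\sigma+1)\,\Gamma(n+1)},
\end{equation*}
obtained from $c_\sigma^{-1}=n\,\mathrm{B}(\sigma+1,n)$. Multiplying the two displays, the factor $\Gamma(n+1)$ cancels and one lands on
\begin{equation*}
\|\tilde{T}_\sigma\|_{L^p(B)\rightarrow L^p(B)} = \frac{\Gamma(n+\sigma+1)}{\Gamma(\sigma+1)\,\Gamma^2(\lambda)}\,\Gamma(1/p)\,\Gamma(\sigma+1-1/p).
\end{equation*}
Finally, using $2\lambda=n+\sigma+1$ to rewrite $\Gamma(n+\sigma+1)=\Gamma(2\lambda)$ produces the first claimed form, and re-expanding $\lambda=(n+\sigma+1)/2$ produces the second.

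There is no serious obstacle at this stage: all the analytic difficulty—the Schur-test upper bound and the extremal lower bound via the $\limsup$ argument of Part~II—was already absorbed into the main Theorem, and the reduction to the one-dimensional weighted space $L^p_n(0,1)$ was carried out by polar coordinates together with Lemma~\ref{LE.I.C}. The only genuine care required is to match the double use of $\lambda$ across the two sections and to perform the substitution $\mu=n$ faithfully; once these are aligned, the result is the one-line cancellation of the $\Gamma(n+1)$ factors displayed above.
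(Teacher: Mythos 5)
Your proposal is correct and follows exactly the paper's route: the paper likewise establishes the reduction $\|\tilde{T}_\sigma\|_{L^p(B)\rightarrow L^p(B)}=c_\sigma\,\|F_\sigma\|_{L^p_n(0,1)\rightarrow L^p_n(0,1)}$ via polar coordinates and Lemma~\ref{LE.I.C} and then reads the theorem off the main result of Section~1 with $\mu=n$, and your Gamma-factor bookkeeping (in particular $c_\sigma=\Gamma(n+\sigma+1)/(\Gamma(\sigma+1)\Gamma(n+1))$ and the cancellation of $\Gamma(n+1)$, together with $2\lambda=n+\sigma+1$) reproduces the stated formula. The one point you, like the paper, leave implicit is why the norm of $\tilde{T}_\sigma$ on all of $L^p(B)$ --- not merely on the radial subspace, which is all that observations (i) and (ii) directly control --- equals $c_\sigma\|F_\sigma\|$; the upper bound is obtained by noting that the Schur test function of Part~I is radial, so the two Schur conditions on the ball reduce by the same polar-coordinate computation to those already verified, while radial functions supply the matching lower bound.
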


\begin{remark}
For $n=1$ Theorem~\ref{TH.MAIN} reduces to the main result in~\cite{DOSTANIC.J.ANAL.MATH}.
See Theorem 1 there. See also~\cite{LIU.ZHOU.IJM}.
\end{remark}

The    conjugate operator $\tilde{T}_\sigma^*: L^p(B)\rightarrow L^p(B)\, (1<p\le\infty)$
of   $\tilde{T}_\sigma: L^q(B)\rightarrow L^q(B)\, (1\le q<\infty)$ is
\begin{equation*}
\tilde{T}_\sigma^*g(z)
= c_\sigma  \int_B \frac{\left(1-|z|^2\right)^\sigma}{\left|1-\left<z,w\right>\right|^{2\lambda}}\, g(w)\, dv(w),\quad z\in B.
\end{equation*}
Since
\begin{equation*}
\|\tilde {T}^*_\sigma\|_{ {L^p(B)\rightarrow L^p(B)} } = \|\tilde {T}_\sigma \|_{{L^q(B)\rightarrow L^q(B)}},
\end{equation*}
we immediately  deduce

\begin{corollary}\label{EQ.NORM.CONJUGATE.TTILDA}
Norm   of  $\tilde {T}_\sigma^*:L^p(B)\rightarrow L^p(B)\, (1<p\le \infty)$ is given by
\begin{equation*}\begin{split}
\|\tilde {T}^*_\sigma\|_{ L^p(B)\rightarrow L^p(B)} &
=\frac { \Gamma(n+\sigma+1)} { \Gamma^2((n +\sigma + 1)/2)  \Gamma(\sigma+1)}   \Gamma( 1/q) \Gamma(\sigma + 1- 1/q)
\\&= c_\sigma \frac{\Gamma(n+1)}{\Gamma^2( (n+\sigma +1)/2)}\Gamma(1/q)\Gamma(\sigma + 1- 1/q)
\end{split}\end{equation*}
for  $\sigma>  1/q-1$.
\end{corollary}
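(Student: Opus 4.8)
The plan is to read off the norm of $\tilde{T}^*_\sigma$ by pure duality, rather than repeating the Schur-test estimate and the extremal-family analysis of Section~1. The only functional-analytic input is the classical identity $\|T\| = \|T'\|$ between a bounded operator and its (pairing) adjoint. I would apply it to $T = \tilde{T}_\sigma$ acting on $L^q(B)$, for $1\le q<\infty$, whose norm is already known from Theorem~\ref{TH.MAIN}, and then transport the value to $\tilde{T}^*_\sigma$ on $L^p(B)$ with $q$ conjugate to $p$.

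First I would check that the explicitly written integral operator $\tilde{T}^*_\sigma$ really is the adjoint of $\tilde{T}_\sigma$ with respect to the conjugate-linear pairing $\langle f,g\rangle=\int_B f\,\overline{g}\,dv$ used in observation (iv) of Section~1. Writing $\tilde{T}_\sigma$ with kernel $k(z,w)=c_\sigma(1-|w|^2)^\sigma|1-\langle z,w\rangle|^{-2\lambda}$, pairing $\tilde{T}_\sigma f$ against $\overline{g}$ and applying the Fubini theorem moves the $w$-integration outside; this shows the adjoint kernel is $\overline{k(w,z)}$. Since $k$ is real-valued and $|1-\langle w,z\rangle|=|1-\langle z,w\rangle|$ (because $\langle w,z\rangle=\overline{\langle z,w\rangle}$), one gets $\overline{k(w,z)}=c_\sigma(1-|z|^2)^\sigma|1-\langle z,w\rangle|^{-2\lambda}$, which is exactly the kernel displayed for $\tilde{T}^*_\sigma$. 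Thus $(\tilde{T}_\sigma)'=\tilde{T}^*_\sigma$ as operators on $L^p(B)$.

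With this identification the displayed relation preceding the statement is justified: the identity $\|T\|=\|T'\|$ gives
\[
\|\tilde{T}^*_\sigma\|_{L^p(B)\to L^p(B)}=\|\tilde{T}_\sigma\|_{L^q(B)\to L^q(B)}.
\]
Now I would apply Theorem~\ref{TH.MAIN} with $p$ replaced by $q$. That theorem is valid precisely when $\sigma>1/q-1$, which matches the hypothesis of the Corollary, and it yields
\[
\|\tilde{T}_\sigma\|_{L^q(B)\to L^q(B)}=\frac{\Gamma(n+\sigma+1)}{\Gamma^2((n+\sigma+1)/2)\,\Gamma(\sigma+1)}\,\Gamma(1/q)\,\Gamma(\sigma+1-1/q),
\]
which is the first asserted form. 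The second form is obtained by substituting $c_\sigma=\Gamma(n+\sigma+1)/(\Gamma(\sigma+1)\Gamma(n+1))$, read off from $c_\sigma^{-1}=n\,\mathrm{B}(\sigma+1,n)$, so that $\Gamma(n+\sigma+1)/(\Gamma(\sigma+1)\Gamma^2((n+\sigma+1)/2))=c_\sigma\,\Gamma(n+1)/\Gamma^2((n+\sigma+1)/2)$; the two expressions then coincide after the $\Gamma(1/q)\Gamma(\sigma+1-1/q)$ factors are appended.

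The single point deserving attention is the endpoint $p=\infty$, equivalently $q=1$. There I would invoke the $L^1$ part of Theorem~\ref{TH.MAIN} (established via Lemma~\ref{LE.NORM.L1}): since $(L^1)^*=L^\infty$, the operator $\tilde{T}^*_\sigma$ on $L^\infty(B)$ is exactly the adjoint of $\tilde{T}_\sigma:L^1(B)\to L^1(B)$, and $\|T\|=\|T'\|$ again gives $\|\tilde{T}^*_\sigma\|_{L^\infty\to L^\infty}=\|\tilde{T}_\sigma\|_{L^1\to L^1}$, valid for $\sigma>0=1/q-1$. No separate extremal computation is needed in this case either. Consequently the only genuine step is the Fubini verification of the adjoint kernel; the rest is bookkeeping with the Gamma-function factors, and I do not anticipate any real obstacle.
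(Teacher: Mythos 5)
Your proposal is correct and follows essentially the same route as the paper: the author also obtains the corollary purely by duality, noting that $\tilde{T}^*_\sigma$ on $L^p(B)$ is the conjugate of $\tilde{T}_\sigma$ on $L^q(B)$, so that $\|\tilde{T}^*_\sigma\|_{L^p(B)\rightarrow L^p(B)}=\|\tilde{T}_\sigma\|_{L^q(B)\rightarrow L^q(B)}$, and then quoting Theorem~\ref{TH.MAIN}. Your additional Fubini verification of the adjoint kernel and the explicit treatment of the endpoint $q=1$ merely spell out details the paper leaves implicit.
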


\subsection{An estimate of the norm of $T_\sigma$}
Forelli and Rudin~\cite{FORELLI.RUDIN.INDIANA} proved that
\begin{equation*}
\|T_\sigma\|_{ L^1(B)\rightarrow L^1_a(B)}  = \frac{\Gamma(\sigma)}{\Gamma(\sigma+1)}\frac{\Gamma(2\lambda)}{\Gamma^2(\lambda)},
\quad \sigma>0.
\end{equation*}
Note that $\|\tilde{T}_\sigma\|_{ L^1(B)\rightarrow L^1(B)} = \|T_\sigma\|_{L^1(B)\rightarrow L^1_a (B)}$.

They also proved
\begin{equation*}
\|T_\sigma\|_{ L^2(B)\rightarrow L^2_a(B)}  = \frac{\sqrt{\Gamma(2\sigma + 1)} }{\Gamma(\sigma+1)}, \quad \sigma>-1/2.
\end{equation*}

Regarding theses results by the Riesz--Thorin theorem  we obtain
\begin{equation*}\begin{split}
\|T_\sigma\|_{L^p(B)\rightarrow L^p_a (B)}&
\le\left\{\frac{\Gamma(2\lambda)}{\Gamma^2(\lambda)}\frac{\Gamma(\sigma)}{\Gamma(\sigma+1)}\right\}^{2/p-1}
\left\{\frac{ \sqrt{\Gamma(2\sigma + 1)} }{\Gamma(\sigma+1)}\right\}^{2-2/p}
\\&= \frac {2^{1-1/p}} {\sigma^{1/p}}   \left\{\frac{  \Gamma(2 \sigma)}{\Gamma^2(\sigma)}\right\}^{1-1/p}
\left\{\frac{\Gamma(2\lambda)}{\Gamma^2(\lambda)}  \right\}^{2/p-1}
\end{split}\end{equation*}
for $1\le p\le 2$ and $\sigma>0$.

The estimate of $\|T_\sigma\|_{ L^p(B)\rightarrow L^p_a(B)}$  given  in the following
corollary,      which  follows from $\|T_\sigma\|_{ L^p(B)\rightarrow L^p_a (B)}  \le
\|\tilde{T}_\sigma\|_{ L^p(B)\rightarrow L^p(B)}$,            is better in some cases.

\begin{corollary}\label{CORO.NORM.ESTIMATE}
\begin{equation*}
\|T_\sigma\|_{ L^p(B)\rightarrow L^p_a (B)}  \le
\frac{\Gamma(2\lambda)} {\Gamma^2(\lambda)
\Gamma(\sigma+1)}\Gamma(1/p)\Gamma(\sigma + 1-1/p)
\end{equation*}
for $1\le p<\infty$ and $\sigma>\ 1/p -1$.
\end{corollary}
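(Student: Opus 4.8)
The plan is to exploit the elementary but crucial fact that the operator $\tilde{T}_\sigma$ was constructed precisely so that its kernel is the modulus of the kernel of $T_\sigma$. When the parameter $s=\sigma$ is real, the exponent $n+1+\sigma$ appearing in the Bergman kernel $K_\sigma(z,w)=(1-|w|^2)^\sigma(1-\langle z,w\rangle)^{-(n+1+\sigma)}$ coincides with $2\lambda$, since $2\lambda=n+\sigma+1$. Hence the first step is simply to record the pointwise identity
\begin{equation*}
|K_\sigma(z,w)|=\frac{(1-|w|^2)^\sigma}{|1-\langle z,w\rangle|^{n+1+\sigma}}=\frac{(1-|w|^2)^\sigma}{|1-\langle z,w\rangle|^{2\lambda}},
\end{equation*}
so that $|K_\sigma(z,w)|$ is exactly the kernel defining $\tilde{T}_\sigma$.

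The second step is to pass the modulus through the integral. For every admissible $f$ and every $z\in B$, the triangle inequality for integrals gives
\begin{equation*}
|T_\sigma f(z)|=\left|c_\sigma\int_B K_\sigma(z,w)\,f(w)\,dv(w)\right|\le c_\sigma\int_B|K_\sigma(z,w)|\,|f(w)|\,dv(w)=\tilde{T}_\sigma|f|(z),
\end{equation*}
where the last equality is the first step together with the definition of $\tilde{T}_\sigma$. Thus $|T_\sigma f|\le\tilde{T}_\sigma|f|$ holds pointwise on $B$.

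The third step is to take $L^p(B)$ norms. Since $0\le|T_\sigma f|\le\tilde{T}_\sigma|f|$, monotonicity of the $L^p$ norm yields $\|T_\sigma f\|_p\le\|\tilde{T}_\sigma|f|\|_p$, and because $\||f|\|_p=\|f\|_p$ and $\tilde{T}_\sigma$ is bounded on $L^p(B)$ (for $\sigma>1/p-1$) by Theorem~\ref{TH.MAIN}, we obtain $\|T_\sigma f\|_p\le\|\tilde{T}_\sigma\|_{L^p(B)\rightarrow L^p(B)}\|f\|_p$. Taking the supremum over $f$ with $\|f\|_p\le1$ then gives the operator inequality $\|T_\sigma\|_{L^p(B)\rightarrow L^p_a(B)}\le\|\tilde{T}_\sigma\|_{L^p(B)\rightarrow L^p(B)}$ already announced in the text preceding the corollary. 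Substituting the explicit value of the right-hand side from Theorem~\ref{TH.MAIN} furnishes exactly the claimed bound.

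There is essentially no analytic obstacle remaining: every nontrivial estimate has already been carried out in Theorem~\ref{TH.MAIN}, which in turn rests on the main theorem and the Schur test. The only point deserving a word of care is that $T_\sigma$ maps into the analytic subspace $L^p_a(B)=L^p(B)\cap H(B)$ rather than into all of $L^p(B)$; but this is harmless, since the $L^p_a$ norm is merely the restriction of the ambient $L^p(B)$ norm, so the domination by $\tilde{T}_\sigma$ bounds the operator norm into $L^p_a(B)$ without any change. I would therefore expect the proof to be a short deduction rather than a computation.
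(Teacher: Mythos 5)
Your proposal is correct and is precisely the argument the paper intends: the corollary is stated to ``follow from $\|T_\sigma\|_{L^p(B)\rightarrow L^p_a(B)}\le\|\tilde{T}_\sigma\|_{L^p(B)\rightarrow L^p(B)}$,'' which is exactly the pointwise domination $|T_\sigma f|\le\tilde{T}_\sigma|f|$ you spell out, combined with Theorem~\ref{TH.MAIN}. No discrepancy with the paper's route.
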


Particularly,        for $\sigma = 0$  we have $P=T_0$ and the norm estimate
\begin{equation*}
\|P\|_{  L^p(B)\rightarrow L^p_a(B)}\le
\frac{\Gamma(n+1)} {\Gamma^2((n+1)/2)} \frac{\pi}{\sin(\pi/p)},
\end{equation*}
where $1<p<\infty$. This estimate for $n=1$ is also obtained in~\cite{DOSTANIC.CZMJ}.

\subsection{$L^p$-norm of the  transform of Berezin}
In  the                    case of the unit ball  the Berezin transform takes the form
\begin{equation*}
\mathfrak{B}f(z)
=\int_B \frac { (1-|z|^2)^{n+1}} {\left|1-\left<z,w\right>\right|^{2n+2}}\, f(w)\, dv(w),\quad z\in B.
\end{equation*}
Berezin~\cite{BEREZIN.IZVESTIYA} introduced the notion  of covariant and contravariant
symbols of an operator. The Berezin transform finds applications       in the study of
Hankel and Toeplitz operators. An interesting result~\cite{ENGLIS.JFA.1994}  says that
if $u\in L^1(\mathbf{U})$, where $\mathbf{U}= \left\{z\in\mathbf{C}: |z|<1\right\}$ is
the unit disc in the complex plane, then $u$ is a harmonic function in $\mathbf{U}$ if
and only if $\mathfrak{B} u = u$.

Observe that
\begin{equation}\label{EQ.BEREZIN.TTILDA}
 {B} = c^{-1}_{n+1} \tilde{T}^*_{n+1}.
\end{equation}

\begin{corollary}
Norm  of the   Berezin  transform $\mathfrak{B}:L^p(B)\rightarrow L^p(B)$    is given by
\begin{equation*}
\|\mathfrak {B}\|_{ L^p(B)\rightarrow L^p(B)}
=\left\{ \prod_{k=1}^n\left( 1+\frac 1 {kp}\right)\right\}\frac {\pi/p} {\sin(\pi/p)}
\end{equation*}
for $1<p<\infty$, and
\begin{equation*}
\|\mathfrak{B}\| _{ L^\infty(B)\rightarrow L^\infty(B)} =1.
\end{equation*}
\end{corollary}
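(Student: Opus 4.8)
The plan is to compute the norm of the Berezin transform directly from the relation $\mathfrak{B} = c_{n+1}^{-1}\tilde{T}^*_{n+1}$ in~\eqref{EQ.BEREZIN.TTILDA} and the explicit formula for $\|\tilde{T}^*_\sigma\|$ in Corollary~\ref{EQ.NORM.CONJUGATE.TTILDA}. First I would specialize that corollary to $\sigma = n+1$ and $\lambda = (n+\sigma+1)/2 = n+1$, noting that for this value of $\sigma$ every conjugate exponent $q\in(1,\infty)$ satisfies $\sigma>1/q-1$, so the formula applies for all $1<p<\infty$. Using the second form of the norm in Corollary~\ref{EQ.NORM.CONJUGATE.TTILDA}, namely $c_\sigma\,\Gamma(n+1)\Gamma^{-2}((n+\sigma+1)/2)\,\Gamma(1/q)\Gamma(\sigma+1-1/q)$, multiplication by $c_{n+1}^{-1}$ cancels the $c_\sigma$ factor cleanly, leaving
\begin{equation*}
\|\mathfrak{B}\|_{L^p(B)\to L^p(B)} = \frac{\Gamma(n+1)}{\Gamma^2(n+1)}\,\Gamma(1/q)\,\Gamma\!\left(n+2-1/q\right).
\end{equation*}

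The heart of the calculation is then to simplify this product of Gamma functions into the stated form. I would write $\Gamma(n+2-1/q) = \Gamma((n+1)+(1-1/q)) = \Gamma((n+1)+1/p)$ and repeatedly apply the functional equation $\Gamma(x+1)=x\Gamma(x)$ to peel off the integer part: $\Gamma(n+1+1/p) = \prod_{k=1}^{n}(k+1/p)\cdot\Gamma(1+1/p) = \prod_{k=1}^{n}(k+1/p)\cdot(1/p)\Gamma(1/p)$. Combining the two Gamma factors in $1/p$ via the reflection formula $\Gamma(1/p)\Gamma(1/q)=\Gamma(1/p)\Gamma(1-1/p)=\pi/\sin(\pi/p)$, and dividing the product $\prod(k+1/p)$ by $\Gamma(n+1)=n!=\prod_{k=1}^n k$, one obtains $\prod_{k=1}^n(1+1/(kp))$ together with a factor $\pi/(p\sin(\pi/p))$. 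This reproduces exactly the claimed expression $\left\{\prod_{k=1}^n(1+\frac{1}{kp})\right\}\frac{\pi/p}{\sin(\pi/p)}$.

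For the endpoint $p=\infty$ the formula above is not available, so I would argue separately that $\|\mathfrak{B}\|_{L^\infty(B)\to L^\infty(B)}=1$. Since the kernel of $\mathfrak{B}$ is nonnegative and $\mathfrak{B}$ reproduces constants — indeed $\mathfrak{B}\mathbf{1}=\mathbf{1}$, which follows because $\tilde{T}_{n+1}\mathbf{1}=\mathbf{1}$ by the normalization $v_s(B)=1$ and the pointwise identity $\int_B |K_{n+1}(z,w)|\,dv(w)\cdot c_{n+1}=1$ that the choice of $c_\sigma$ guarantees — one has $\|\mathfrak{B}f\|_\infty\le\|f\|_\infty\,\|\mathfrak{B}\mathbf{1}\|_\infty=\|f\|_\infty$, giving norm at most $1$, while testing on $f\equiv 1$ shows the norm is exactly $1$.

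The main obstacle is purely bookkeeping: one must track carefully that $\lambda=n+1$ when $\sigma=n+1$, so that $\Gamma^2((n+\sigma+1)/2)=\Gamma^2(n+1)$, and must correctly identify $\sigma+1-1/q = n+2-1/q = n+1+1/p$ before applying the recursion and reflection formulas. The substitution $1/q = 1-1/p$ at the right moment is what turns the opaque Gamma quotient into the clean product formula, and verifying that the $c_{n+1}$ factors cancel (rather than compound) is the one place where a sign or reciprocal slip would propagate into the final answer.
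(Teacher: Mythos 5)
Your treatment of the case $1<p<\infty$ is essentially identical to the paper's proof: specialize Corollary~\ref{EQ.NORM.CONJUGATE.TTILDA} to $\sigma=n+1$ (so $\lambda=n+1$), cancel $c_{n+1}$ against the relation \eqref{EQ.BEREZIN.TTILDA}, rewrite $\Gamma(1/q)\,\Gamma(n+2-1/q)$ as $\Gamma(1-1/p)\,\Gamma(n+1+1/p)$, peel off the factors $k+1/p$ with $\Gamma(x+1)=x\Gamma(x)$, and finish with the reflection formula. That part is correct and needs no changes.

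The one genuine flaw is in your justification of $\|\mathfrak{B}\|_{L^\infty(B)\rightarrow L^\infty(B)}=1$. You assert that $\tilde{T}_{n+1}\mathbf{1}=\mathbf{1}$ ``by the normalization $v_s(B)=1$'' and that $c_{n+1}\int_B|K_{n+1}(z,w)|\,dv(w)=1$ pointwise. Both claims are false for $z\neq 0$: the normalization of $c_s$ gives $T_s\mathbf{1}=\mathbf{1}$ for the kernel \emph{without} absolute values, whereas the paper's own radial computation (via Lemma~\ref{LE.I.C} and the Euler formula~\eqref{EQ.INT.TRANS.1}) shows $\tilde{T}_\sigma\mathbf{1}(z)={_2}F_1(\lambda,\lambda;2\lambda;|z|^2)$, which is strictly greater than $1$ for $0<|z|<1$ since all coefficients of the hypergeometric series are positive. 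What is true --- and what your argument actually needs --- is $\mathfrak{B}\mathbf{1}=\mathbf{1}$, i.e.\ $\int_B|1-\left<z,w\right>|^{-2n-2}\,dv(w)=(1-|z|^2)^{-(n+1)}$; this follows from the reproducing property of the Bergman kernel, $\int_B|K(w,z)|^2\,dv(w)=K(z,z)$, not from the choice of $c_\sigma$. With that identity in hand your argument (nonnegative kernel, reproduces constants, test on $f\equiv 1$) closes correctly. The paper sidesteps the issue entirely by setting $q=1$ in Corollary~\ref{EQ.NORM.CONJUGATE.TTILDA}, which gives $\|\tilde{T}^*_{n+1}\|_{L^\infty(B)\rightarrow L^\infty(B)}=c_{n+1}$ and hence $\|\mathfrak{B}\|_{L^\infty(B)\rightarrow L^\infty(B)}=1$ directly.
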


The result of this corollary is obtain in~\cite{LIU.ZHOU.IJM}, but we  give a proof for
the sake of completeness.

\begin{proof}
Let $1<p<\infty$.                 Since by Corollary~\ref{EQ.NORM.CONJUGATE.TTILDA} we have
\begin{equation*}
\|\tilde {T}^*_{n+1}\| _{L^p(B)\rightarrow L^p(B)} = c_{n+1} \frac{\Gamma(1- 1/p) \Gamma (n+1 +  1/p)}{\Gamma (n+1)},
\end{equation*}
it   follows by~\eqref{EQ.BEREZIN.TTILDA} that
\begin{equation*}\begin{split}
\|\mathfrak{B}\|_{L^p(B)\rightarrow L^p(B)} &= \frac{\Gamma(1-  1/p)\Gamma(n+1 + 1/p)}{\Gamma(n+1)}
\\&=\frac{\Gamma(1- 1/p)\left\{ \prod_{k=1}^n  (k + 1/p)\right\}     \Gamma(1/p)}{{n!}\, p}
\\&=\left\{ \prod_{k=1}^n\left( 1+\frac 1 {kp}\right)\right\}\frac {\pi/p}{\sin(\pi/p)}.
\end{split}\end{equation*}
We have used  the Euler identity $\Gamma(x) \Gamma(1-x) =  \frac {\pi}{\sin({\pi}/x)}$  for
$x\in (0,1)$ in order to obtain the last expression.

The case $p=\infty$ follows also from Corollary~\ref{EQ.NORM.CONJUGATE.TTILDA}. Introducing
$q=1$ we obtain
\begin{equation*}
\|\tilde {T}^*_{n+1} \|_{ L^\infty(B)\rightarrow L^\infty(B) } = c_{n+1},
\end{equation*}
what         implies the result concerning the $L^{\infty}$-norm of the Berezin   transform.
\end{proof}

\begin{corollary}
\begin{equation*}
\|{B}\|_{ L^2(B)\rightarrow L^2(B)}  =  \frac{(2n+ 1)!!}{( 2n)!!} {\frac\pi 2}.
\end{equation*}
\end{corollary}

\begin{corollary}
\begin{equation*}
\|{B}\|_{ L^p(B)\rightarrow L^p(B)}\sim  \frac {(n+1) \pi }{\sin(\pi/p)}\sim\frac {(n+1) \pi }{\pi -  \pi/ p}
\sim\frac {n+1 }{p - 1}, \quad
 p\rightarrow 1.
\end{equation*}
\end{corollary}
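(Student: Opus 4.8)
The plan is to read off all three asymptotic equivalences directly from the exact formula
\begin{equation*}
\|\mathfrak{B}\|_{L^p(B)\rightarrow L^p(B)} = \left\{\prod_{k=1}^n\left(1+\frac{1}{kp}\right)\right\}\frac{\pi/p}{\sin(\pi/p)}
\end{equation*}
established in the preceding corollary, by letting $p\to 1$. Since each of the three expressions appearing in the asserted chain is obtained from the previous one by replacing a factor by an asymptotically equivalent one, it suffices to justify three elementary limits.

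First I would handle the finite product. Being continuous in $p$, it satisfies
\begin{equation*}
\prod_{k=1}^n\left(1+\frac{1}{kp}\right)\longrightarrow \prod_{k=1}^n\left(1+\frac 1k\right)=\prod_{k=1}^n\frac{k+1}{k}=n+1,
\end{equation*}
the last equality being a telescoping cancellation. Combining this with $\pi/p\to\pi$ yields the first equivalence $\|\mathfrak{B}\|\sim (n+1)\pi/\sin(\pi/p)$, since the only factor that blows up as $p\to 1$ is $1/\sin(\pi/p)$.

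For the second equivalence I would set $u=\pi-\pi/p$, which tends to $0^+$ as $p\to 1^+$. Using $\sin(\pi/p)=\sin(\pi-u)=\sin u$ together with the first-order expansion $\sin u\sim u$ gives $\sin(\pi/p)\sim \pi-\pi/p$, hence $(n+1)\pi/\sin(\pi/p)\sim (n+1)\pi/(\pi-\pi/p)$. Finally, writing $\pi-\pi/p=\pi(p-1)/p$ gives
\begin{equation*}
\frac{(n+1)\pi}{\pi-\pi/p}=\frac{(n+1)p}{p-1}\sim \frac{n+1}{p-1},
\end{equation*}
because $p\to 1$, which is the third equivalence.

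I do not anticipate any genuine obstacle here: the content is entirely the telescoping identity for the product and the first-order behaviour of the sine near $\pi$. The only point that requires a moment's care is to recognize $\sin(\pi/p)$ as $\sin(\pi-\pi/p)$, so that the relevant small quantity driving the singular behaviour is $\pi-\pi/p$ rather than $\pi/p$ itself.
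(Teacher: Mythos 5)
Your derivation is correct and is precisely the intended one: the paper states this corollary without proof as an immediate consequence of the exact formula $\|\mathfrak{B}\|_{L^p\rightarrow L^p}=\bigl\{\prod_{k=1}^n(1+\frac{1}{kp})\bigr\}\frac{\pi/p}{\sin(\pi/p)}$ from the preceding corollary, and your three elementary limits (telescoping product $\to n+1$, $\sin(\pi/p)=\sin(\pi-\pi/p)\sim\pi-\pi/p$, and $\pi-\pi/p=\pi(p-1)/p\sim\pi(p-1)$) are exactly what that reduction requires.
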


\end{document}